\documentclass[12pt]{amsart}
\usepackage{a4wide,graphicx,hyperref}

\newtheorem{proposition}{Proposition}[section]
\newtheorem{theorem}[proposition]{Theorem}
\newtheorem{lemma}[proposition]{Lemma}
\newtheorem{corollary}[proposition]{Corollary}
\newtheorem{question}[proposition]{Question}

\theoremstyle{remark} \theoremstyle{definition}
\newtheorem{example}[proposition]{Example}
\newtheorem{remark}[proposition]{Remark}
\newtheorem{definition}[proposition]{Definition}

\numberwithin{equation}{section}

\title{Similar dissection of sets}
\author[S.~Akiyama]{Shigeki Akiyama}
\author[J.~Luo]{Jun Luo}
\author[R.~Okazaki]{Ryotaro Okazaki}
\author[W.~Steiner]{Wolfgang Steiner}
\author[J.~Thuswaldner]{J\"org Thuswaldner}

\address{Department of Mathematics, Faculty of Science Niigata University, Ikarashi 2-8050, Niigata 950-2181, JAPAN}
\email{akiyama@math.sc.niigata-u.ac.jp \vspace{-1.5mm}}
\address{School of Mathematics and Computational Science, Sun Yat-Sen University, Guangzhou 510275, CHINA}
\email{luojun3@mail.sysu.edu.cn \vspace{-1.5mm}}
\address{Department of Knowledge Engineering and Computer Sciences, Doshisha University, 1-3 Tataramiyakodani, Kyotanabe-shi, Kyoto-fu, 610-0394 JAPAN}
\email{rokazaki@mail.doshisha.ac.jp \vspace{-1.5mm}}
\address{LIAFA, CNRS UMR 7089, Universit\'e Paris Diderot -- Paris 7,
Case 7014, 75205 Paris Cedex 13, FRANCE}
\email{steiner@liafa.jussieu.fr \vspace{-1.5mm}}
\address{Chair of Mathematics and Statistics, University of Leoben, Franz-Josef-Strasse 18, A-8700 Leoben, AUSTRIA}
\email{Joerg.Thuswaldner@unileoben.ac.at}
\thanks{This research was supported by the Japanese Ministry of Education, Culture, Sports, Science and Technology, Grant-in Aid for fundamental research 21540010, 2009--2011, by the project 10601069 of the National Natural Science Foundation of China, by the project ANR--06--JCJC--0073 ``DyCoNum'' of the French Agence Nationale de la Recherche, by the project S9610 of the Austrian Science Foundation, by the Amad\'ee grant FR--13--2008 and the PHC Amadeus grant 17111UB}


\begin{document}
\begin{abstract}
In 1994, Martin Gardner stated a set of questions concerning the dissection of a square or an equilateral triangle in three similar parts.
Meanwhile, Gardner's questions have been generalized and some of them are already solved.
In the present paper, we solve more of his questions and treat them in a much more general context.

Let $D\subset \mathbb{R}^d$ be a given set and let $f_1,\ldots,f_k$
be injective continuous mappings.
Does there exist a set $X$ such that $D = X \cup  f_1(X) \cup \ldots \cup f_k(X)$ is satisfied with a non-overlapping union?
We prove that such a set $X$ exists for certain choices of $D$ and $\{f_1,\ldots,f_k\}$.
The solutions $X$ often turn out to be attractors of iterated function systems with condensation in the sense of Barnsley.

Coming back to Gardner's setting, we use our theory to prove that an equilateral triangle can be dissected in three similar copies whose areas have ratio $1:1:a$ for $a \ge (3+\sqrt{5})/2$.
\end{abstract}

\maketitle

\begin{section}{Introduction}
In the present paper, we deal with the dissection of a given set $D$ into finitely many parts which are similar to each other.
Before we establish the fairly general setting of the present paper, we give a brief outline of the existing results on this topic.

In 1994, Martin Gardner~\cite{Gardner:94} (see also \cite[Chapter~16]{Gardner:01}) asked a set of questions concerning the dissection of a square as well as an equilateral triangle in three similar parts.
The existence of such a dissection is easy to verify if all parts are congruent to each other.
In the case of the square, we get three congruent rectangles.
Generalizing a result of Stewart and Wormstein~\cite{Stewart-Wormstein:92}, Maltby~\cite{Maltby:94} proved that this is the only dissection of a square in three congruent pieces (see also \cite{Liping-Yuqin-Ren:02}, where an analogous
question is settled for a parallelogram).

\begin{figure}[ht]
\includegraphics{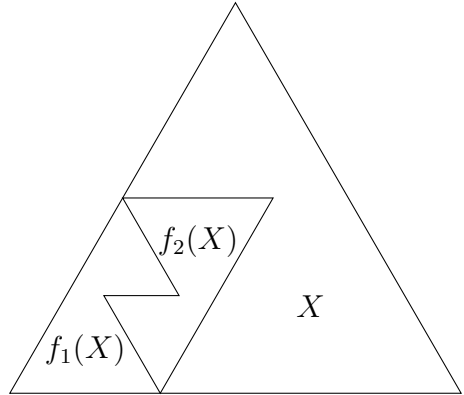}
\caption{Karl Scherer's dissection of an equilateral triangle in three similar parts, just two of which are congruent.}
\label{ShigekiPolygon}
\end{figure}

Finding a solution to Gardner's set of problems becomes more tricky if one requires that at least one of the parts is not congruent to the other ones.
A~nice solution to the problem of dissecting an equilateral triangle in three parts, just two of which are congruent, was given by Karl Scherer (see \cite[p.~123]{Gardner:01}).
It is depicted in Figure~\ref{ShigekiPolygon}.
Here, an equilateral triangle with vertices $(0,0), (1,0), (1/2,\sqrt{3}/2)$ is dissected into three pieces $X, f_1(X), f_2(X)$, where $f_1, f_2$ are two similarities with contractive ratios equal to $1/2$ and $X$ is the polygon with the consecutive vertices given by $(1/3,0), (1,0), (1/2,\sqrt{3}/2), (1/4,\sqrt{3}/4), (7/12,\sqrt{3}/4)$.
Scherer found nice solutions also for the case of dissecting a square with just two congruent parts.
Also, the dissection of a square as well as an equilateral triangle in three non-congruent pieces was done by him (all these dissections are depicted in \cite[Chapter~16]{Gardner:01}).
Chun, Liu and van Vliet~\cite{Chun-Liu-Vliet:96} studied a more general problem.
Indeed, let $m=a_1+\cdots+a_n$ be an integer composition of~$m$.
The question is to dissect a square in $m$ similar pieces so that there are $a_1$ pieces of largest size, $a_2$ pieces of second-largest
size and so on.
They prove that such a dissection is possible if and only if the composition is not of the form $m=(m-1)+1$.

In the present paper, we are going to generalize these questions considerably.
A first stage of generalization is contained in the following question, which will be solved partially in the subsequent sections and which will be used as a paradigm for our general theory.

\begin{question}
Can we dissect an equilateral triangle into three pieces of the same shape with area ratio $1:1:a$ for each $a>0$?
\end{question}

Contrary to the results quoted above, we want to gain solutions to dissection problems in similar parts whose similarity ratios are prescribed.
Indeed, using our general framework, we will be able to construct a dissection of an equilateral triangle in three pieces of area ratio $1:1:a$ with $a \in \{1\} \cup \big[\frac{3+\sqrt{5}}{2}, \infty\big)$.
Moreover, we will not restrict ourselves to the equilateral triangle but also consider arbitrary compact subsets of~$\mathbb{R}^d$.
Interestingly, in our studies we will meet the number ``high phi'' which is defined as the positive root of $x^3-2x^2+x-1$ (see \cite[p.~124]{Gardner:01}) and which already played a role in Scherer's original problems.
We mention that ``high phi'' is the square of the smallest Pisot number.

Note that the problem of finding a dissection with area ratios $1:1:a$ for arbitrary $a>0$ is trivial if we do not fix the set $D$ which we want to dissect.
For instance, as illustrated in Figure~\ref{rectangle}, for each
$r>0$, we can find a rectangle that admits an obvious dissection into three parts with area ratio $1:1:r^{-2}$.
Thus, throughout the present paper we are interested in finding dissections of a fixed set $D \subset \mathbb{R}^d$ in similar parts with prescribed ratios.

\begin{figure}[ht]
\includegraphics{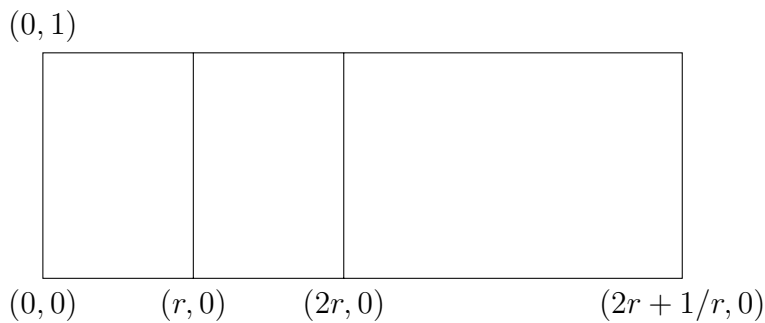}
\caption{For any given $r>0$, we can find a rectangle that can be dissected into three similar rectangles (each of whose side-lengths have ratio $r:1$) with area ratios $1:1:r^{-2}$.} \label{rectangle}
\end{figure}

We now set up a general framework that contains the problems discussed above as special cases.
In what follows, $\mu_d$ will denote the $d$-dimensional Lebesgue measure.

\begin{definition}
Let $D \subset \mathbb{R}^d$ be a compact set with $\overline{D^\circ} = D$ and $\mathcal{F} := \{f_1,f_2,\dots, f_k\}$ a finite family of injective mappings from $\mathbb{R}^d$ to itself.
We say that $\mathcal{F}$ admits a \emph{dissection of $D$} if there exists a compact set $X \subset \mathbb{R}^d$ with $\overline{X^\circ} = X$ such that
\[
D = X \cup f_1(X) \cup f_2(X) \cup \ldots \cup f_k(X),
\]
where $\mu_d(f_i(X) \cap f_j(X)) = 0$ for all disjoint $i,j \in \{1,\ldots,k\}$ and $\mu_d(X \cap f_i(X)) = 0$ for each $i \in \{1,\ldots,k\}$.
We call $X$ the \emph{generator} of the dissection.
\end{definition}

The difficulty of constructing a dissection of $D$ for a given family $\mathcal{F}$ depends on the properties of $\mathcal{F}$ and $D$.
Actually, one of our main aims is to discuss the existence and the uniqueness of the compact set $X$ so that $D = X \cup f_1(X) \cup \cdots \cup f_k(X)$ is a dissection of~$D$.
The treatment of the following classes turns out to be easier than the general case.

\begin{definition}
Let $D \subset \mathbb{R}^d$ be a compact set with
$\overline{D^\circ} = D$ and $\mathcal{F} := \{f_1,f_2,\dots, f_k\}$
a finite family of injective mappings from $\mathbb{R}^d$ to itself.
\begin{itemize}
\item
$\mathcal{F}$ is called \emph{inside family} (with respect to~$D$) if $f_i(D) \subset D$ for each $i \in \{1,\ldots,k\}$.
\item
$\mathcal{F}$ is called \emph{non-overlapping family} (with respect to~$D$) if $\mu_d\big(f_i(D) \cap f_j(D)\big) = 0$ holds for each $i,j \in \{1,\ldots,k\}$ with $i \neq j$.
\end{itemize}
\end{definition}

If all the functions in $\mathcal{F}$ are contractions, then $\mathcal{F}$ can be regarded as an \emph{iterated function system} (\emph{IFS} for short) in the sense of Hutchinson~\cite{Hutchinson:81}.
In this case, there exists a unique non-empty compact set $K \subset \mathbb{R}^d$, called the \emph{attractor} of the IFS $\mathcal{F}$, satisfying
\[
K = \bigcup_{i=1}^k f_i(K).
\]
Setting
\[
\Phi(X)= \bigcup_{i=1}^k f_i(X),
\]
this can be written as $K = \Phi(K)$.

A variant of IFS are so-called \emph{IFS with condensation} ({\it cf.} Barnsley~\cite{Barnsley:93}).

\begin{definition}[IFS with condensation]
Let $\mathcal{F} = \{f_1,\ldots, f_k\}$ be a family of contractions in $\mathbb{R}^d$ and $A \subset \mathbb{R}^d$ a nonempty compact set.
Then the pair $(\mathcal{F},A)$ is called an \emph{IFS with condensation~$A$}.
The unique non-empty compact set $K$ satisfying the set equation
\[
K= A \cup f_1(K) \cup \ldots \cup f_k(K) = A \cup \Phi(K)
\]
is called the \emph{attractor} of the IFS $\mathcal{F}$ with  condensation~$A$.
\end{definition}

The unique existence of $K$ is proved by a standard fix point argument.
It is given by
\[
K = A \cup \Phi(A) \cup \Phi^2(A) \cup \ldots
\]
In some cases, the solution $X$ to our dissection problem will be an attractor of an IFS related to $\mathcal{F}$ with a certain condensation depending on the set~$D$.
\end{section}

\begin{section}{Non-overlapping inside families}
We start with the easiest case, non-overlapping inside families.
We can construct a dissection for these families provided that $\mathcal{F}$ consists of contractions and $D$ is compact.
The main result of this section, Theorem~\ref{IFS}, will be used in subsequent sections in order to settle more complicated cases.

For the proof of Theorem~\ref{IFS}, we need the following consequence of the invariance of domains.

\begin{lemma}\label{Brouwer}
Let $f:\, \mathbb{R}^d \to \mathbb{R}^d$ be an injective contraction
and $X \subset \mathbb{R}^d$ a compact set. Then $\partial f(X) =
f(\partial X)$.
\end{lemma}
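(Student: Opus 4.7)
The plan is to translate the desired equality $\partial f(X) = f(\partial X)$ into the equivalent statement $f(X^\circ) = f(X)^\circ$, and then derive the latter from Brouwer's invariance of domain theorem. Note that since $f$ is continuous and $X$ is compact, $f(X)$ is compact, hence closed; and $X$ is closed by assumption. So both $X$ and $f(X)$ coincide with their closures, and we may write $\partial X = X \setminus X^\circ$ and $\partial f(X) = f(X) \setminus f(X)^\circ$. Using the injectivity of $f$, we have $f(X) \setminus f(X^\circ) = f(X \setminus X^\circ) = f(\partial X)$, so the lemma reduces to establishing $f(X^\circ) = f(X)^\circ$.

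The first main step is to apply invariance of domain to the injective continuous map $f: \mathbb{R}^d \to \mathbb{R}^d$. This yields that $f(\mathbb{R}^d)$ is open in $\mathbb{R}^d$ and that $f$ is a homeomorphism from $\mathbb{R}^d$ onto its image. In particular $f$ is an open mapping on $\mathbb{R}^d$, and $f^{-1}$ is continuous on the open set $f(\mathbb{R}^d)$.

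The inclusion $f(X^\circ) \subset f(X)^\circ$ then follows at once: $X^\circ$ is open, so $f(X^\circ)$ is open, and it is contained in $f(X)$, hence in the interior $f(X)^\circ$. For the reverse inclusion, pick $y \in f(X)^\circ$. Since $f(X)^\circ \subset f(X) \subset f(\mathbb{R}^d)$, the preimage $x = f^{-1}(y)$ is well defined, and the continuity of $f^{-1}$ on $f(\mathbb{R}^d)$ shows that $f^{-1}(f(X)^\circ)$ is an open subset of $\mathbb{R}^d$ contained in $X$, hence contained in $X^\circ$. Thus $x \in X^\circ$ and $y = f(x) \in f(X^\circ)$, completing the equality $f(X^\circ) = f(X)^\circ$ and therefore the lemma.

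There is no real obstacle here beyond correctly invoking invariance of domain to promote injectivity plus continuity to the homeomorphism property; the contraction hypothesis enters only to guarantee continuity. Once the homeomorphism statement is in hand, the argument is purely set-theoretic topology.
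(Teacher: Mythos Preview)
Your proof is correct and follows precisely the paper's approach: invoke invariance of domain to conclude that $f$ is a homeomorphism onto its (open) image, from which the boundary identity follows. The paper's proof is a one-line appeal to this fact, whereas you have helpfully unpacked the set-theoretic details showing how $f(X^\circ)=f(X)^\circ$ (and hence $\partial f(X)=f(\partial X)$) is deduced from the homeomorphism property.
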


\begin{proof}
By the invariance of domains (see {\it e.g.} \cite[Theorem~2B.3]{Hatcher:02}), the mapping $f$ is a homeomorphism.
This implies the result.
\end{proof}

\begin{theorem}\label{IFS}
Let $D \subset \mathbb{R}^d$ be a compact set with $D = \overline{D^\circ}$ and $\mu_d(\partial D) = 0$.
Let $\mathcal{F} := \{f_1,\ldots,f_k\}$ be an IFS on~$\mathbb{R}^d$, whose attractor is denoted by~$E$.
Suppose that $\mathcal{F}$ is a non-overlapping inside family.
Then $\mathcal{F}$ admits a dissection of $D$ if and only if $\mu_d(E) = 0$.
Moreover, the generator of the dissection is unique.
\end{theorem}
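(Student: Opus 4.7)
The plan is to exhibit $X$ as the attractor of an auxiliary IFS with condensation, the condensation set being the outer shell $A := \overline{D \setminus \Phi(D)}$ of~$D$. Since $\mathcal{F}$ is an inside family one has $D = A \cup \Phi(D)$, and iterating this relation together with the standard fact $\bigcap_{n \geq 0} \Phi^n(D) = E$ yields the set-theoretic decomposition $D = \overline{\bigcup_{n \geq 0} \Phi^n(A)}$. I would take $X$ to be the attractor of the doubled IFS $\mathcal{F}^2 := \{f_i \circ f_j : 1 \leq i,j \leq k\}$ with condensation~$A$, i.e.\ the unique non-empty compact set satisfying $X = A \cup \Phi^2(X)$. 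Unfolding, $X = \overline{\bigcup_{k \geq 0} \Phi^{2k}(A)}$ collects the ``even'' layers and $\Phi(X) = \overline{\bigcup_{k \geq 0} \Phi^{2k+1}(A)}$ the ``odd'' ones, and both contain $E$ as the common set of limit points of their shrinking pieces.

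For sufficiency, assume $\mu_d(E) = 0$. The identity $X \cup \Phi(X) = \overline{\bigcup_n \Phi^n(A)} = D$ is immediate, and $X = \overline{X^\circ}$ follows from $A = \overline{A^\circ}$ (which in turn follows from $\overline{D^\circ}=D$ and the closedness of $\Phi(D)$) together with the openness of each $f_i(X^\circ)$ supplied by Lemma~\ref{Brouwer}. The measure non-overlap conditions reduce, via $f_i(X) \subset f_i(D)$ and the non-overlap of the $f_i(D)$, to estimating $\mu_d(X \cap \Phi(X))$. Decomposing according to the layers, every cross term $\Phi^{2k}(A) \cap \Phi^{2\ell+1}(A)$ with $k \leq \ell$ reduces modulo measure zero (using injectivity of the $f_i$ and non-overlap of the $f_i(D)$) to a contraction-image of $A \cap \Phi(D) \subset \partial \Phi(D)$; since Lemma~\ref{Brouwer} and $\mu_d(\partial D) = 0$ give $\mu_d(\partial \Phi(D)) = 0$, every such cross term is null. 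The only remaining contribution is $E$ itself, yielding $\mu_d(X \cap \Phi(X)) = \mu_d(E) = 0$ as required.

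For necessity and uniqueness, let $X'$ be any generator. The inclusion $\Phi(X') \subset \Phi(D)$ combined with $D = X' \cup \Phi(X')$ forces $D \setminus \Phi(D) \subset X'$, and $A = \overline{A^\circ}$ together with $X'$ closed upgrades this to $A \subset X'$. Similarly, $\Phi^2(X') \subset D = X' \cup \Phi(X')$ together with the measure-disjointness of $\Phi^2(X')$ and $\Phi(X')$ (derived from $\mu_d(X' \cap \Phi(X')) = 0$, injectivity of the $f_i$, and non-overlap of the $f_i(D)$) gives $\mu_d(\Phi^2(X') \setminus X') = 0$; the regularity $\Phi^2(X') = \overline{\Phi^2(X')^\circ}$ supplied by Lemma~\ref{Brouwer} then promotes this measure inclusion to $\Phi^2(X') \subset X'$. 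Iterating yields $\Phi^{2k}(A) \subset X'$ for every $k \geq 0$, hence $X \subset X'$ and, after applying $\Phi$, $\Phi(X) \subset \Phi(X')$. Consequently $\mu_d(E) \leq \mu_d(X \cap \Phi(X)) \leq \mu_d(X' \cap \Phi(X')) = 0$, proving necessity. For uniqueness in the case $\mu_d(E) = 0$, iterating the measure identity $X' = A \cup \Phi^2(X')$ and controlling the remainder by $\mu_d(\Phi^{2N}(X')) \leq \mu_d(\Phi^{2N}(D)) \to \mu_d(E) = 0$ gives $\mu_d(X') = \mu_d(X)$; combined with $X \subset X'$ and $X' = \overline{(X')^\circ}$ this forces $X' = X$.

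The main technical obstacle throughout is the passage from measure-theoretic identities (which is all the non-overlap hypothesis directly yields) to genuine set-theoretic inclusions; it is precisely the regularity hypotheses $\overline{D^\circ} = D$, $\mu_d(\partial D) = 0$, and $\overline{X^\circ} = X$, together with Lemma~\ref{Brouwer}, that make this passage possible.
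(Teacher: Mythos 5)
Your proposal is correct and follows essentially the same route as the paper: your candidate $X = \overline{\bigcup_{k\ge 0}\Phi^{2k}(A)}$ with $A=\overline{D\setminus\Phi(D)}$ is exactly the paper's set $Z = E\cup Y\cup\Phi^2(Y)\cup\cdots$ (which the paper likewise identifies as the attractor of the doubled IFS with condensation), and your necessity argument via $A\subset X'$, $\Phi^2(X')\subset X'$ and $E\subset X'\cap\Phi(X')$ mirrors the paper's. The only genuine difference is cosmetic: for uniqueness you iterate the measure identity $X'=A\cup\Phi^2(X')$ to force $\mu_d(X')=\mu_d(X)$, whereas the paper argues more directly that $X'\setminus X\subset\Phi(X)\subset\Phi(X')$ would contradict $\mu_d\big(X'\cap\Phi(X')\big)=0$; both are valid and of comparable length.
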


\begin{proof}
Assume that $\mathcal{F}$ admits a dissection of~$D$ generated by
$X$. Then we have $X^\circ \cap \Phi(X^\circ) = \emptyset$, thus the
non-overlapping inside property implies that $\Phi(X^\circ) \cap
\Phi^2(X^\circ) = \emptyset$, which yields $\Phi^2(X^\circ) \subset
D^\circ \setminus \Phi(X^\circ)$. Since $\overline{X^\circ} = X$ and
$X \cup \Phi(X) = D$, we obtain that $\Phi^2(X) \subset X$. By
induction, we see that
\begin{equation} \label{Phi2nX}
\Phi^{2n}(X) \subset X\quad \mbox{for all}\ n\ge0.
\end{equation}

Set $Y := \overline{D \setminus \Phi(D)}$.
Then we have $Y \subset X$ and, by (\ref{Phi2nX}),
\begin{equation} \label{YX}
Y \cup \Phi^2(Y) \cup \Phi^4(Y) \cup \ldots \subset X.
\end{equation}
Hutchinson's classical theory on IFS (see~\cite{Hutchinson:81}) implies that $(\Phi^{2n}(Y))_{n\ge0}$ converges to $E$ in Hausdorff metric.
Since $X$ is closed, we obtain
\begin{equation} \label{EX}
E \subset X.
\end{equation}
By definition, we have $E = \Phi(E)$, implying that $E \subset X \cap \Phi(X)$ and, hence, $\mu_d(E) \le \sum_{i=1}^k \mu_d\big(X\cap f_i(X)\big) = 0$.
Therefore, $\mathcal{F}$ can admit a dissection of $D$ only if $\mu_d(E) = 0$.

\begin{figure}[ht]
\includegraphics{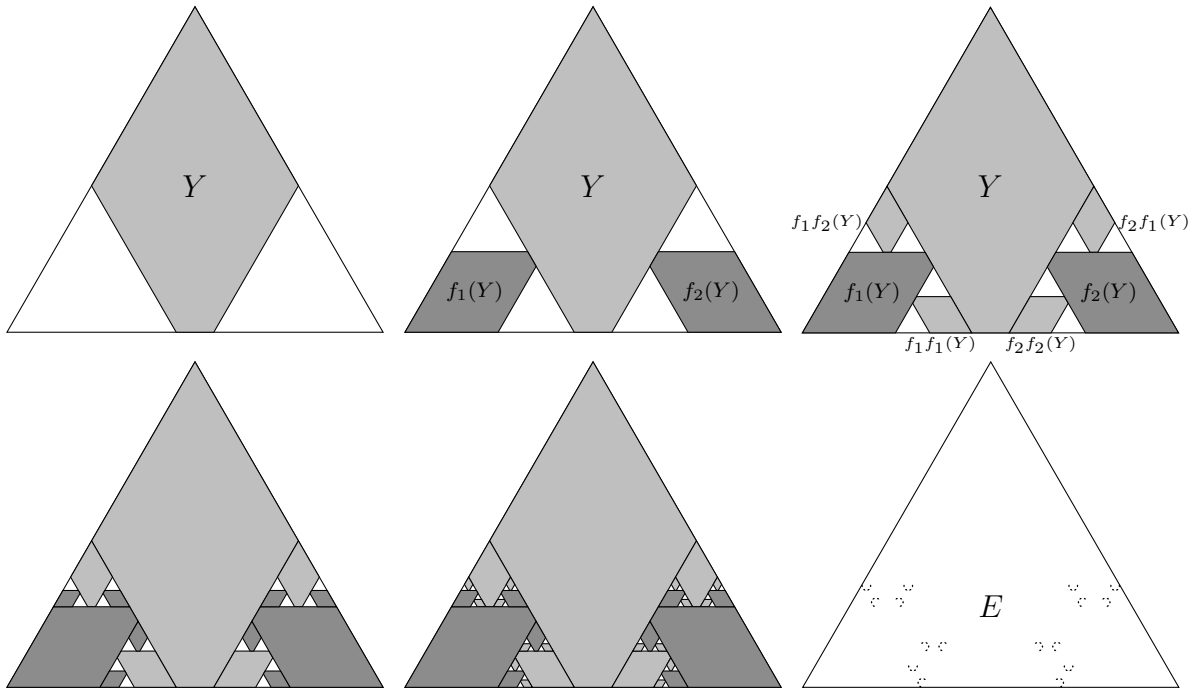}
\caption{The attractor $E$ of the IFS with an inside non-overlapping family $\{f_1,f_2\}$ (below right) and the convergence process in the IFS with condensation $\big(\{f_1f_1,f_1f_2,f_2f_1,f_2f_2\}, Y \cup E\big)$, $Y := \overline{D \setminus \Phi(D)}$, to the generator of the dissection of $D$ with respect to $\{f_1,f_2\}$.
The mappings $f_1,f_2$ are given in Example~\ref{exa4}, with $r=9/20$.}
\label{approximation}
\end{figure}

Assume now that $\mu_d(E)=0$ and consider the set
\[
Z := E \cup Y \cup \Phi^2(Y) \cup \Phi^4(Y) \cup \ldots
\]
with $Y$ defined as above. Note that
\begin{equation} \label{Zint}
Z = \overline{Z^\circ}
\end{equation}
because $\Phi^{2n}(Y) = \overline{\Phi^{2n}(Y)^\circ}$ (by Lemma~\ref{Brouwer}) and $E$ is the Hausdorff limit of the sequence of sets $(\Phi^{2n}(Y))_{n\ge0}$.
By the non-overlapping condition, we have
\begin{equation} \label{fijZ}
\mu_d\big(f_i(Z) \cap f_j(Z)\big) = 0\quad \mbox{for}\ i \ne j.
\end{equation}
Next, we shall prove that
\begin{equation} \label{muY}
\mu_d\big(\Phi^n(Y) \cap \Phi^m(Y)\big) = 0\quad \mbox{for}\ n \neq m.
\end{equation}
See Figure~\ref{approximation} for an illustration of $Y, \Phi(Y), \Phi^2(Y), \Phi^3(Y), \Phi^4(Y)$ and~$E$.
As $\mu_d(\partial D) = 0$,
\[
\mu_d\big(\Phi^n(Y) \cap \Phi^m(Y)\big) = \mu_d\big(\Phi^n(D \setminus \Phi(D)) \cap \Phi^m(D \setminus \Phi(D))\big).
\]
By the non-overlapping condition, we obtain
\begin{equation} \label{nmY}
\mu_d\big(\Phi^n(Y) \cap \Phi^m(Y)\big) = \mu_d\Big(\big(\Phi^n(D) \setminus \Phi^{n+1}(D)\big) \cap \big(\Phi^m(D) \setminus \Phi^{m+1}(D)\big)\Big).
\end{equation}
Since $\Phi(D^\circ) \subset D^\circ$ by the inside condition, we know that $\Phi^{n+1}(D^\circ) \subset \Phi^{n}(D^\circ)$, thus the sets $\Phi^{n}(D^\circ) \setminus \Phi^{n+1}(D^\circ)$ are pairwise disjoint, hence the right hand side of (\ref{nmY}) is 0, which yields (\ref{muY}).
Thus, as $\mu_d(E) = 0$ by assumption, we get
\begin{equation} \label{ZPhiZ}
\mu_d\big(Z \cap \Phi(Z)\big) \le \mu_d(E) + \mu_d\big(\Phi(E)\big) = 0.
\end{equation}
Moreover, since $Y \cup \Phi(Y) \cup \ldots \cup \Phi^n(Y) = \overline{D \setminus \Phi^{n+1}(D)}$, which tends to $\overline{D \setminus E}$ in Hausdorff metric, we get
\begin{equation} \label{equalD}
Z \cup \Phi(Z) = E \cup Y \cup \Phi(Y) \cup \Phi^2(Y) \cup \ldots = D.
\end{equation}
Combining (\ref{Zint}), (\ref{fijZ}), (\ref{ZPhiZ}) and (\ref{equalD}), we conclude that $\mathcal{F}$ admits a dissection of~$D$.
Two examples for dissections originating from non-overlapping inside families are given in Figure~\ref{approximation2} (see also Example~\ref{exa4}).

\begin{figure}[ht]
\includegraphics{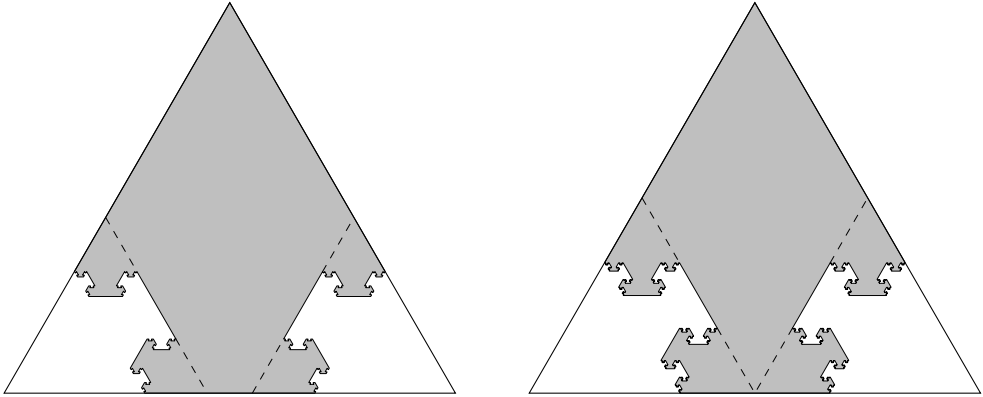}
\caption{The dissections discussed in Example~\ref{exa4} for $r=9/20$ and $r=1/2$.}
\label{approximation2}
\end{figure}

To prove the uniqueness of the generator of a dissection, assume
that $X$ generates a dissection of $D$, which is different from~$Z$.
Then (\ref{YX}) and (\ref{EX}) imply that $Z \subset X$. As $Z =
\overline{Z^\circ}$ and $X = \overline{X^\circ}$, we obtain $\mu_d(X
\setminus Z) > 0$. By the dissection property of~$Z$, we have $X
\setminus Z \subset \Phi(Z) \subset \Phi(X)$, thus $\mu\big(X \cap
\Phi(X)\big) \ge \mu(X \setminus Z) > 0$, which contradicts the
dissection property of~$X$.
\end{proof}

If all the $f_i$ are similarities, the condition $\mu_d(E) = 0$ can be checked easily.

\begin{corollary} \label{simcor}
Let $D \subset \mathbb{R}^d$ be a compact set with $D = \overline{D^\circ}$ and $\mu_d(\partial D) = 0$.
Let $\mathcal{F} :=\{f_1,\ldots,f_k\}$ be an IFS on~$\mathbb{R}^d$, where every $f_i$ is a similarity.
Suppose that $\mathcal{F}$ is a non-overlapping inside family and $\Phi(D) \neq D$.
Then $\mathcal{F}$ admits a unique dissection of~$D$.
\end{corollary}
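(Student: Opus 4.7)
The plan is to invoke Theorem~\ref{IFS}: once the attractor $E$ is shown to satisfy $\mu_d(E) = 0$, both existence and uniqueness of the dissection follow immediately. Write $r_i \in (0,1)$ for the contraction ratio of the similarity $f_i$, so that $\mu_d(f_i(A)) = r_i^d \mu_d(A)$ for every Borel set $A \subset \mathbb{R}^d$.

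First I would record two scaling identities. Iterating the inside property gives $\Phi^n(D) \subset D$ for all $n$, and since $E$ is the Hausdorff limit of $\Phi^n(D)$, we have $E \subset D$; consequently $f_i(E) \cap f_j(E) \subset f_i(D) \cap f_j(D)$ is a $\mu_d$-null set whenever $i \neq j$, and the self-similarity $E = \bigcup_{i} f_i(E)$ combined with the non-overlapping property yields
\[
\mu_d(E) = \Bigl(\sum_{i=1}^k r_i^d\Bigr)\, \mu_d(E).
\]
The same non-overlapping argument applied directly to $\Phi(D)$ gives
\[
\mu_d(\Phi(D)) = \Bigl(\sum_{i=1}^k r_i^d\Bigr)\, \mu_d(D).
\]
Since $D = \overline{D^\circ}$ is a nonempty compact set with nonempty interior, $\mu_d(D) > 0$; therefore it suffices to prove $\mu_d(\Phi(D)) < \mu_d(D)$, as this forces $\sum r_i^d < 1$ and hence $\mu_d(E) = 0$ from the first display.

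Next I would leverage the hypothesis $\Phi(D) \neq D$ to produce positive measure in $D \setminus \Phi(D)$. Pick $x \in D \setminus \Phi(D)$; since $\Phi(D)$ is a finite union of images of compacts and hence compact, there is an open neighborhood $V$ of $x$ disjoint from $\Phi(D)$. The assumption $D = \overline{D^\circ}$ implies that every open set meeting $D$ meets $D^\circ$, so $V \cap D^\circ \neq \emptyset$, and I may choose an open ball $W \subset V \cap D^\circ$. Then $W \subset D \setminus \Phi(D)$, and $\mu_d(D \setminus \Phi(D)) \geq \mu_d(W) > 0$, completing the argument.

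I expect this last step — converting the set-theoretic inequality $\Phi(D) \neq D$ into a positive-measure statement — to be the only delicate point, because it is where the topological condition $D = \overline{D^\circ}$ must be used in an essential way (without it, $D \setminus \Phi(D)$ could be a set of positive Hausdorff dimension but zero Lebesgue measure). The remaining arithmetic then gives $\sum r_i^d < 1$, which forces $\mu_d(E) = 0$, and Theorem~\ref{IFS} delivers both the existence and the uniqueness of the generator of the dissection.
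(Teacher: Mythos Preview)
Your argument is correct and follows the same route as the paper: show $\sum_i r_i^d < 1$ from $\mu_d(\Phi(D)) < \mu_d(D)$, deduce $\mu_d(E)=0$ from the self-similarity of~$E$, and then invoke Theorem~\ref{IFS}. The only difference is that you carefully justify the strict inequality $\mu_d(\Phi(D)) < \mu_d(D)$ by producing an open ball in $D\setminus\Phi(D)$ via the hypothesis $D=\overline{D^\circ}$, whereas the paper simply asserts that $\Phi(D)\neq D$ implies $\mu_d(D) > \mu_d(\Phi(D))$ without spelling this out.
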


\begin{proof}
By Theorem~\ref{IFS}, we only have to show that $\mu_d(E) = 0$.
Observe that $\Phi(D) \neq D$ implies that
\[
\mu_d(D) > \mu_d\big(\Phi(D)\big) = \sum_{i=1}^k r_i^d \mu_d(D),
\]
where $r_i$ is the contraction ratio of $f_i$ for each $i \in
\{1,\ldots,k\}$, thus $\sum_{i=1}^k r_i^d < 1$. Since $E = \Phi(E)$,
we have
\[
\mu_d(E) = \mu_d\big(\Phi(E)\big) = \sum_{i=1}^k r_i^d \mu_d(E),
\]
which implies $\mu_d(E) = 0$.
\end{proof}

From the definition of $Z$ in the proof of Theorem~\ref{IFS}, we
obtain the following description of the dissection in terms of an
IFS with condensation.

\begin{corollary}
Let $D$ and $\mathcal{F}$ be given as in Theorem~\ref{IFS}.
Let $E$ be the attractor of the IFS~$\mathcal{F}$, $\mu_d(E) = 0$.
Then the unique dissection of $D$ with respect to $\mathcal{F}$ is given by the unique solution of the IFS with condensation $\big(\{f \circ g:\, f,g \in \mathcal{F}\}, \overline{D \setminus \Phi(D)} \cup E\big)$.
\end{corollary}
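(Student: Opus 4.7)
The plan is to show that the IFS with condensation named in the corollary reproduces the very set $Z$ that was constructed in the proof of Theorem~\ref{IFS} as the unique generator of the dissection of $D$.

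First I would unpack the data. Writing $\mathcal{G} := \{f\circ g :\, f,g\in\mathcal{F}\}$, the associated Hutchinson operator is exactly $\Phi^2$, and the proposed condensation set is $A := Y \cup E$ with $Y := \overline{D\setminus\Phi(D)}$. Since $Y$ is compact and $E$ is compact, $A$ is compact, so the standard Barnsley fixed point argument yields a unique non-empty compact set $K$ with $K = A \cup \Phi^2(K)$, and this attractor admits the series representation
\[
K \;=\; \overline{A \cup \Phi^2(A) \cup \Phi^4(A) \cup \cdots}.
\]

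Next I would compute this union explicitly. Since $E$ is the attractor of $\mathcal{F}$, we have $\Phi(E) = E$, hence $\Phi^{2n}(E) = E$ for every $n\ge 0$. Distributing $\Phi^{2n}$ over $A = Y\cup E$ therefore gives
\[
K \;=\; \overline{\,E \,\cup\, Y \,\cup\, \Phi^2(Y) \,\cup\, \Phi^4(Y) \,\cup\, \cdots\,}.
\]
On the other hand, the set $Z$ built in the proof of Theorem~\ref{IFS} was precisely
\[
Z \;=\; E \,\cup\, Y \,\cup\, \Phi^2(Y) \,\cup\, \Phi^4(Y) \,\cup\, \cdots,
\]
and $Z$ was shown there to be closed (it contains $E$, which is the Hausdorff limit of $(\Phi^{2n}(Y))_{n\ge 0}$). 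Thus $K = Z$.

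Finally I would invoke Theorem~\ref{IFS}: under its hypotheses $Z$ is the unique generator of the dissection of $D$ by $\mathcal{F}$, so $K$ is that unique generator as well, which is exactly the claim. The only point that requires a moment of care is the remark that adding $E$ to the condensation is not cosmetic but necessary: dropping it would leave a union whose closure still equals $Z$, but the attractor of an IFS with condensation is required to solve a set equation exactly, and the formula $K=A\cup\Phi^2(K)$ forces $A$ to absorb the Hausdorff limit $E$ of the iterates $\Phi^{2n}(Y)$. Once this is observed, no further obstacle arises.
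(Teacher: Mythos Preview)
Your argument is correct and is exactly the intended one: the paper's ``proof'' is just the sentence that the corollary follows from the definition of $Z$ in Theorem~\ref{IFS}, and you have spelled out precisely that identification.

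One small correction to your closing remark: including $E$ in the condensation is \emph{not} actually necessary. If $A'=Y$ alone, then $Z$ still solves $Z = Y \cup \Phi^2(Z)$, because $\Phi^2(Z) \supset \Phi^2(E) = E$; so the unique attractor of $(\mathcal{G},Y)$ is again $Z$. The set equation does not force $E\subset A$; rather, $E$ reappears automatically inside $\Phi^2(K)$. This does not affect your proof of the corollary as stated, but the justification you give for the presence of $E$ is off.
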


In the following, we discuss some examples for Theorem~\ref{IFS}.
\begin{corollary}
Subdividing a star body by the ratio $1:a$ is possible for any $a>0$.
\end{corollary}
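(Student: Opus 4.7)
The plan is to deduce the statement directly from Corollary~\ref{simcor} applied to a singleton family. Translate $D$ so that its star-shaped center is the origin, and exploit the symmetry of the ratio $1:a$ (swapping the two pieces corresponds to replacing $a$ by $1/a$) to reduce to the case $0 < a < 1$; the degenerate equal-ratio case $a=1$ is not covered by a contractive similarity and must be handled separately (or interpreted as lying outside the statement).

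Set $r = a^{1/d} \in (0,1)$ and take $f(x) = rx$, so $\mathcal{F} := \{f\}$ is an IFS consisting of one similarity of ratio~$r$. The hypotheses of Corollary~\ref{simcor} are then easy to check: the standard definition of a star body already guarantees that $D$ is compact with $\overline{D^\circ} = D$ and $\mu_d(\partial D) = 0$; the non-overlapping condition is trivial for a one-element family; the inside condition $f(D) = rD \subset D$ follows at once from star-shapedness, since for every $x \in D$ the point $rx$ lies on the segment $[0,x] \subset D$; and $\Phi(D) = rD \neq D$ because $\mu_d(rD) = r^d \mu_d(D) < \mu_d(D)$.

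Corollary~\ref{simcor} therefore yields a unique compact generator $X$ with $D = X \cup f(X)$ and the two pieces overlapping only in a Lebesgue-null set. The measures then satisfy $\mu_d(X) : \mu_d(f(X)) = 1 : r^d = 1 : a$, which is precisely a dissection of $D$ in the claimed ratio. The central observation---and essentially the only step that uses the star-body hypothesis---is the inclusion $rD \subset D$ for $r \in (0,1)$; everything else is formal, so I do not anticipate any real obstacle beyond the $a=1$ caveat mentioned above.
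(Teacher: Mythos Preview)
Your argument is correct and follows essentially the same route as the paper: take the single scaling map centered at the star point and invoke the non-overlapping inside-family result (you cite Corollary~\ref{simcor}, the paper cites Theorem~\ref{IFS}, which amounts to the same thing for a similarity). You are simply more explicit than the paper about the symmetry $1:a \leftrightarrow 1:1/a$, the $a=1$ edge case, and the verification of the hypotheses, and you work in general dimension~$d$ rather than restricting to~$\mathbb{R}^2$.
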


\begin{proof}
Let $D \subset \mathbb{R}^2$ be a star body and suppose the origin
is the center of this star body, {\it i.e.}, any segment connecting
the origin and a point in $D$ is entirely in $D$. Take $\mathcal{F}
= \{f_1\}$ with $f_1(x) = x/\sqrt{a}$. Then $D$ and $\mathcal{F}$
meet the conditions of Theorem~\ref{IFS}. This proves the corollary.
\end{proof}

Figure~\ref{f1} shows an example for a subdivision of the equilateral triangle with area ratio $1:2$.

\begin{figure}[ht]
\includegraphics{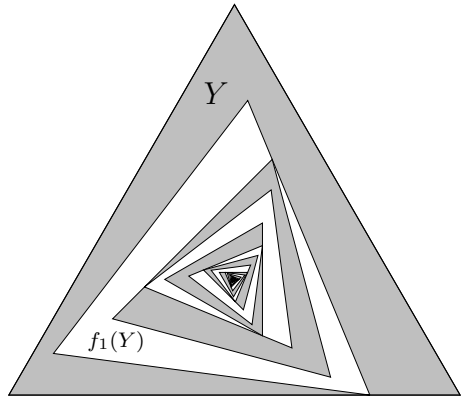}
\caption{A dissection of the triangle with area ratio $1:2$.
The dissection is done by the IFS $\{f_1\}$ with $f_1(x,y) = \sqrt{1/2} R(5\pi/8) (x,y) + (4/5,0)$ where $R(\alpha)$ denotes the counterclockwise rotation with angle~$\alpha$; $Y = \overline{D \setminus f_1(D)}$.}
\label{f1}
\end{figure}

\begin{example}\label{exa4}
We want to dissect the equilateral triangle $D = \triangle\big((0,0), (1,0), \big(\frac{1}{2}, \frac{\sqrt{3}}{2}\big)\big)$ with the IFS $\{f_1,f_2\}$, where
\[
f_1(x,y) = r R\Big(\frac{2\pi}{3}\Big) (x,y) + (r,0)\quad \hbox{and}\quad f_2(x,y) = r R\Big(\frac{4\pi}{3}\Big) (x,y) + \Big(1-\frac{r}{2}, \frac{r\sqrt{3}}{2}\Big),
\]
with $r \in (0,1/2]$ and $R(\alpha)$ being the counterclockwise rotation with angle $\alpha$ around the origin ({\it cf.} Figure~\ref{approximation}).
It is easy to see that $\{f_1,f_2\}$ is an inside non-overlapping family, providing a dissection of $D$ with respect to $\{f_1,f_2\}$ in view of Corollary~\ref{simcor}.
Figure~\ref{approximation2} shows the dissections for the choices $r=9/20$ and $r = 1/2$.
\end{example}

The mappings defined in Example~\ref{exa4} show that the equilateral triangle can be dissected in similar parts with area ratios $1:1:a$ for each $a \ge 4$.
Figure~\ref{approximation2} suggests that it is possible to go beyond this bound.
Indeed, we will establish dissections coming from families where the inside as well as the non-overlapping condition will be violated.
An application will be the construction of dissections of the equilateral triangle with area ratio $1:1:a$ for each $a \ge (3+\sqrt{5})/2$.
\end{section}

\begin{section}{A general dissection result and its consequences}
In this section we will give a criterion which enables us to construct dissections of $D$ with respect to (not necessarily inside and non-overlapping) families~$\mathcal{F}$.

\begin{theorem} \label{main}
Let $D \subset \mathbb{R}^d$ be a compact set with $D = \overline{D^\circ}$ and $\mu_d(\partial D) = 0$.
Let $\mathcal{F} :=\{f_1,\ldots,f_k\}$ be an IFS on~$\mathbb{R}^d$, whose attractor $E$ satisfies $\mu_d(E) = 0$.

Suppose that there exists some $Y \subset D$ satisfying the following conditions.
\begin{itemize}
\item[(1)]
$\overline{Y^\circ} = Y$, $\mu_d(\partial Y) = 0$,
\item[(2)]
$Y, f_1(Y), \ldots, f_k(Y)$ are subsets of $D$ which are mutually disjoint in measure,
\item[(3)]
$Y, f_1\big(D \setminus \Phi(Y)\big), \ldots, f_k\big(D \setminus \Phi(Y)\big)$ are subsets of $D$ which are mutually disjoint in measure.
\end{itemize}
Then $D$ admits a dissection with respect to~$\mathcal{F}$.
\end{theorem}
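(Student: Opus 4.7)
The plan is to reduce Theorem~\ref{main} to the non-overlapping inside case handled by Theorem~\ref{IFS} by identifying a smaller ``residual'' region on which the $f_i$'s behave as a non-overlapping inside family. Let $W := D \setminus \Phi(Y)$ and let $D'$ be the closure of $W \setminus Y$ (or, more carefully, $\overline{(D \setminus (Y \cup \Phi(Y)))^\circ}$). The content of conditions~(2) and~(3) is exactly what is needed to force the restricted family $\{f_i|_{D'}\}$ to be non-overlapping and inside on~$D'$: by~(2), $Y \subseteq W$ in measure, whence for each $i$ one has $f_i(W) \cap \Phi(Y) = f_i(Y)$ and therefore $f_i(W) \cap W = f_i(W \setminus Y)$; condition~(3) then yields $f_i(W \setminus Y) \cap Y = 0$ and $f_i(W \setminus Y) \cap f_j(W \setminus Y) = 0$ in measure for $i \neq j$. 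Consequently $f_i(D') \subseteq D'$ and $\mu_d(f_i(D') \cap f_j(D')) = 0$ for $i \neq j$.

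Next I would check that $D'$ satisfies the topological hypotheses of Theorem~\ref{IFS}, namely $\overline{(D')^\circ} = D'$ and $\mu_d(\partial D') = 0$. These follow from condition~(1) on $Y$, the assumption $\mu_d(\partial D) = 0$, and Lemma~\ref{Brouwer} applied to each $f_i(Y)$, since $\partial D' \subseteq \partial D \cup \partial Y \cup \partial \Phi(Y)$ and each of these pieces has $d$-dimensional measure zero. Since $\mu_d(E) = 0$ by hypothesis, Theorem~\ref{IFS} applies to $(D',\mathcal{F})$ and supplies a unique compact set $X_1 \subseteq D'$ generating a dissection $D' = X_1 \cup \Phi(X_1)$.

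Finally I set $X := Y \cup X_1$ and verify that $X$ generates the required dissection of $D$. The key combinatorial observation is the three-part decomposition $D = Y \sqcup \Phi(Y) \sqcup D'$ in measure, which follows from $D = W \cup \Phi(Y)$ together with $W = Y \cup (W \setminus Y)$ and the disjointness provided by~(2) and~(3); combined with $D' = X_1 \sqcup \Phi(X_1)$, this yields $D = (Y \cup X_1) \cup (\Phi(Y) \cup \Phi(X_1)) = X \cup \Phi(X)$. Non-overlap of $X$ with $\Phi(X)$, and of the pieces $f_i(X)$ among themselves, expands by distributing the unions into finitely many cross-terms each of which vanishes in measure by~(2),~(3), or by the dissection of $D'$; the interior property $\overline{X^\circ} = X$ follows from condition~(1) on $Y$ and the analogous property of $X_1$ inherited from Theorem~\ref{IFS}. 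The main obstacle, in my view, is the careful bookkeeping required to pass between ``in measure'' inclusions and set-theoretic containments (via appropriate closures of $W \setminus Y$) so that the hypotheses of Theorem~\ref{IFS} apply to $D'$ exactly as stated, and so that the resulting $X$ is genuinely compact with the correct interior.
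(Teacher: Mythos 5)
Your proposal is correct and follows essentially the same route as the paper: the set you call $D'$ is exactly the paper's $C=\overline{D\setminus(Y\cup\Phi(Y))}$, conditions (2) and (3) are used in the same way to show $\mathcal{F}$ is a non-overlapping inside family on that residual set, Theorem~\ref{IFS} is invoked there, and the generator of the dissection of $D$ is obtained by adjoining $Y$.
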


\begin{proof}
Let $C = \overline{D \setminus \big(Y \cup \Phi(Y)\big)}$.
Obviously, the set $C$ is compact and the closure of its interior.
Moreover, we have $\mu_d(\partial C)=0$. We only have to show that
$\mathcal{F}$ is an inside non-overlapping family for~$C$. By
Theorem~\ref{IFS}, this implies that $\mathcal{F}$ admits a unique
dissection of $C$ generated by $X$. From this immediately follows
together with (2) that $X \cup Y$ generates a dissection of $D$ with
respect to~$\mathcal{F}$.

We first prove that $\mathcal{F}$ is an inside family for $C$, {\it
i.e.}, that $f_i(C) \subset C$. By (3), we have
\begin{equation}\label{new1}
\mu_d\big(Y \cap f_i\big(D\setminus \Phi(Y)\big)\big) = 0.
\end{equation}
Moreover, since by (2) we have $Y \subset D\setminus\Phi(Y)$ up to a set of measure zero, (3) implies that
\begin{equation}\label{new2}
\mu_d\big(f_j(Y) \cap f_i\big(D \setminus \Phi(Y)\big)\big) = 0
\quad (i\not=j).
\end{equation}
By the injectivity of $f_i$ we have
\begin{equation}\label{new3}
\mu_d\big(f_i(Y) \cap f_i(D \setminus Y)\big) = 0.
\end{equation}
Combining \eqref{new1}, \eqref{new2} and \eqref{new3} we arrive at
\begin{equation}\label{new4}
\mu_d\Big(\big(Y \cup \Phi(Y)\big) \cap f_i\big(D \setminus \big(Y \cup \Phi(Y)\big)\big)\Big) = 0.
\end{equation}
Together with (1) and the fact that $\mu_d(\partial D)=0$ equation
\eqref{new4} yields
\[
\mu_d\Big(\big(Y \cup \Phi(Y)\big) \cap f_i(C)\Big) = 0.
\]
Applying (3) again, we conclude that $f_i(C) \subset C$, thus
$\mathcal{F}$ is an inside family for~$C$.

The non-overlapping property of $\mathcal{F}$ for $C$ follows from
(3) as well. This proves the theorem.
\end{proof}

\begin{remark} \label{3dash}
In view of Condition~(2), Condition~(3) can be rewritten as
\begin{itemize}
\item[(3')]
$f_1(C), \ldots, f_k(C)$ are subsets of $C$ which are mutually disjoint in measure.
\end{itemize}
Here, $C = \overline{D \setminus \big(Y \cup \Phi(Y)\big)}$ as in the proof of Theorem~\ref{main}.
\end{remark}

\begin{remark}
If $\mathcal{F}$ is a non-overlapping inside family, then we can choose $Y = \emptyset$ in Theorem~\ref{main}.
\end{remark}

\begin{corollary}\label{outsidecorollary}
Let $D \subset \mathbb{R}^d$ be a compact set with $D = \overline{D^\circ}$ and $\mu_d(\partial D) = 0$.
Let $\mathcal{F} :=\{f_1,\ldots,f_k\}$ be an IFS on~$\mathbb{R}^d$, whose attractor $E$ satisfies $\mu_d(E) = 0$.

If $\mathcal{F}$ is a non-overlapping family,
\begin{itemize}
\item[(i)]
$\Phi\big(D \setminus \Phi(D)\big) \subset D$ and
\item[(ii)]
$\Phi\big(D \cap \Phi^2(D)\big) \subset D$,
\end{itemize}
then $D$ admits a dissection with respect to~$\mathcal{F}$.
\end{corollary}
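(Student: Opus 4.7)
The plan is to deduce this from Theorem~\ref{main} by making a judicious choice of the auxiliary set $Y$. The natural candidate is
\[
Y := \overline{D \setminus \Phi(D)},
\]
i.e., the portion of $D$ not already covered by $\Phi(D)$. With this choice, condition~(1) of Theorem~\ref{main} is essentially routine: $Y$ is the closure of the open set $D^\circ \setminus \Phi(D)$, so $\overline{Y^\circ}=Y$, and $\mu_d(\partial Y)=0$ follows from $\mu_d(\partial D)=0$ together with standard regularity of the compact set $\Phi(D)$.

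The technical heart of the proof is the computation of $\Phi(Y)$ and of $D\setminus\Phi(Y)$. Since each $f_i$ is injective, $f_i(D \setminus \Phi(D)) = f_i(D) \setminus f_i(\Phi(D))$, and combining this with the non-overlapping assumption one obtains, up to a set of measure zero,
\[
\Phi(Y) \;=\; \overline{\Phi(D) \setminus \Phi^2(D)}, \qquad D \setminus \Phi(Y) \;=\; \bigl(D \setminus \Phi(D)\bigr) \cup \bigl(D \cap \Phi^2(D)\bigr).
\]
This decomposition is precisely tailored to hypotheses~(i) and~(ii): applying $\Phi$ to the first piece lands inside $D$ by~(i), and applying $\Phi$ to the second piece lands inside $D$ by~(ii), so $\Phi(D\setminus\Phi(Y))\subset D$, and in particular $f_i(D\setminus\Phi(Y))\subset D$ for every~$i$.

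Conditions~(2) and~(3) of Theorem~\ref{main} then fall out with the non-overlapping property doing the rest of the work. For~(2), $Y\subset D$ is obvious, $f_i(Y)\subset D$ follows from~(i) after taking closures, and mutual disjointness in measure holds because $f_i(Y)\cap f_j(Y)\subset f_i(D)\cap f_j(D)$ has measure zero for $i\neq j$, while $Y\cap f_i(Y)$ sits, up to measure zero, inside $(D\setminus\Phi(D))\cap\Phi(D)=\emptyset$. The same two ingredients — the non-overlapping property and the essential disjointness of $Y$ from $\Phi(D)$ — yield condition~(3), now that $f_i(D\setminus\Phi(Y))\subset D$ has been established.

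The step I expect to be the main obstacle is the identity for $\Phi(Y)$: making rigorous the claim that $\Phi(D)\setminus\Phi^2(D)$ and $\bigcup_i\bigl(f_i(D)\setminus f_i(\Phi(D))\bigr)$ differ only by a measure-zero set requires invoking the non-overlapping property with some care, because a point of $f_i(D)\setminus f_i(\Phi(D))$ could a priori lie in $f_j(\Phi(D))$ for some $j\neq i$, but if so it would also lie in the measure-zero set $f_i(D)\cap f_j(D)$. Once this identity is pinned down, the application of Theorem~\ref{main} is immediate and the corollary follows.
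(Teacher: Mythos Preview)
Your proposal is correct and follows essentially the same route as the paper: choose $Y=\overline{D\setminus\Phi(D)}$ and verify the three conditions of Theorem~\ref{main}. One small simplification worth noting: the paper avoids your ``main obstacle'' entirely by using only the trivial inclusion $\Phi(D)\setminus\Phi^2(D)\subset\Phi(D\setminus\Phi(D))\subset\Phi(Y)$ (pure set theory from injectivity, no non-overlapping needed), which already gives $D\setminus\Phi(Y)\subset(D\setminus\Phi(D))\cup(D\cap\Phi^2(D))$ exactly rather than up to measure zero; the equality you aim for is more than what is required.
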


\begin{proof}
We show that $Y := \overline{D \setminus \Phi(D)}$ satisfies
Conditions (1)--(3) of Theorem~\ref{main}. Since $D =
\overline{D^\circ}$ and $\mu_d(\partial D) = 0$, the same properties
hold for~$Y$, which implies Condition~(1). Condition~(2) is an
immediate consequence of (i), the non-overlapping property and the
fact that
\[
\mu_d\big(Y \cap \Phi(Y)\big) \le \mu_d\big(Y \cap \Phi(D)\big)=0.
\]
To show Condition~(3), note that
\begin{equation} \label{DY}
D \setminus \Phi(Y) = D \setminus \Phi\big(\overline{D \setminus \Phi(D)}\big) \subset D \setminus \Phi\big(D \setminus \Phi(D)\big) \subset D \setminus \big(\Phi(D) \setminus \Phi^2(D)\big).
\end{equation}
Since
\begin{equation} \label{DDD2}
D \setminus \big(\Phi(D) \setminus \Phi^2(D)\big) = \big(D \setminus \Phi(D)\big) \cup \big(D \cap \Phi^2(D)\big),
\end{equation}
using (\ref{DY}), (\ref{DDD2}), (i) and (ii), we obtain that
\[
\Phi\big(D \setminus \Phi(Y)\big) \subset \Phi\big(D \setminus \Phi(D)\big) \cup \Phi\big(D \cap \Phi^2(D)\big) \subset D.
\]
Together with the non-overlapping property and the fact that
\[
\mu_d\big(Y \cap \Phi\big(D\setminus \Phi(Y)\big)\big) \le
\mu_d\big(Y \cap \Phi(D)\big) = 0,
\]
this implies Condition~(3).
\end{proof}

\begin{remark}
For each positive integer $n$, we can replace Conditions (i) and (ii) in Corollary~\ref{outsidecorollary} by
\begin{itemize}
\item[(i)]
$\Phi\big(D \cap \Phi^{2k}\big(D \setminus \Phi(D)\big)\big) \subset D$ for all $0 \le k < n$ and
\item[(ii)]
$\Phi\big(D \cap \Phi^{2n}(D)\big) \subset D$.
\end{itemize}
To prove this, set
\[
Y := \overline{D\cap \bigcup_{0\le k<n} \Phi^{2k}\big(D \setminus
\Phi(D)\big)}.
\]
Again, we have to show that Conditions (1)--(3) of Theorem~\ref{main} are fulfilled.
Conditions~(1) and (2) are proved as for Corollary~\ref{outsidecorollary}.
Condition~(3) follows now from
\begin{align*}
D \setminus \Phi(Y) & \subset D \setminus \bigcup_{0\le k<n} \Phi^{2k+1}\big(D \setminus \Phi(D)\big) \\
& \subset D \setminus \bigcup_{0\le k<n} \Big(\Phi^{2k+1}(D) \setminus \Phi^{2k+2}(D)\Big) \\
& \subset D \cap \bigg(\bigcup_{0\le k<n} \Big(\Phi^{2k}(D) \setminus \Phi^{2k+1}(D)\Big) \cup \Phi^{2n}(D)\bigg) \\
& = \bigcup_{0\le k<n} \Big(D \cap \Phi^{2k}\big(D \setminus \Phi(D)\big)\Big) \cup \Big(D \cap \Phi^{2n}(D)\Big)
\end{align*}
by the same reasoning as in the proof of Corollary~\ref{outsidecorollary}.
\end{remark}
\end{section}

\begin{section}{Examples for general dissections}

The following example shows that not every overlapping family yields a dissection.
We do not have a satisfactory answer for the existence of a solution, see Section~\ref{problems}.

\begin{example} \label{exanodis}
Let $D = \triangle\big((0,0), (1,0), \big(\frac{1}{2},
\frac{\sqrt{3}}{2}\big)\big)$ and the IFS $\{f_1,f_2\}$ given by
\[
f_1(x,y) = r R\Big(\frac{4\pi}{3}\Big) (x,y) + \Big(\frac{r}{2}, \frac{r\sqrt{3}}{2}\Big),\qquad
f_1(x,y) = r R\Big(\frac{2\pi}{3}\Big) (x,y) + (1,0)
\]
with $r>1/2$.
Then we have $\mu_2\big(f_1(Y) \cap f_2(Y)\big) > 0$ for $Y := \overline{D \setminus \Phi(D)}$, see Figure~\ref{fignodis}.
Since every dissection of $D$ with respect to $\{f_1,f_2\}$ must contain~$Y$, we conclude that $D$ admits no such dissection.
\end{example}

\begin{figure}[ht]
\includegraphics{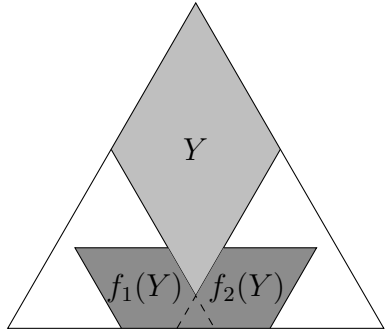}
\caption{The sets $Y, f_1(Y), f_2(Y)$ showing that Example~\ref{exanodis} admits no dissection.}
\label{fignodis}
\end{figure}

As a first application of Theorem~\ref{main}, we extend Example~\ref{exa4} to contraction ratios $r \le (\sqrt{5}-1)/2$.

\begin{example} \label{exagold}
Let $D = \triangle\big((0,0), (1,0), \big(\frac{1}{2}, \frac{\sqrt{3}}{2}\big)\big)$ and the IFS $\{f_1,f_2\}$ given by
\[
f_1(x,y) = r R\Big(\frac{2\pi}{3}\Big) (x,y) + (r,0),\qquad f_2(x,y) = r R\Big(\frac{4\pi}{3}\Big) (x,y) + \Big(1-\frac{r}{2}, \frac{r\sqrt{3}}{2}\Big)
\]
with $r \in \big(0, \frac{\sqrt{5}-1}{2}\big]$.
Choose
\[ Y := \triangle\Big(\Big(\frac{1}{2(1+r)}, \frac{\sqrt{3}}{2(1+r)}\Big), \Big(1-\frac{1}{2(1+r)}, \frac{\sqrt{3}}{2(1+r)}\Big), \Big(\frac{1}{2}, \frac{\sqrt{3}}{2}\Big)\Big). \]
Then it is easy to see that $Y$ satisfies the conditions of Theorem~\ref{main}, see Figure~\ref{Cfigure}.
Indeed, the sets $f_1(C)$ and $f_2(C)$ are disjoint in measure if and only if the first coordinate of the rightmost point of $f_1(C)$ is less than or equal to $1/2$.
As this yields the inequality $\frac{r}{1+r} + \frac{1}{2} \frac{r^2}{1+r} \le \frac{1}{2}$, we obtain the condition $r \le (\sqrt{5}-1)/2$.
Figure~\ref{goldenmean} shows the dissections for the choices $r=11/20$ and $r=(\sqrt{5}-1)/2$.
\end{example}

\begin{figure}[ht]
\includegraphics{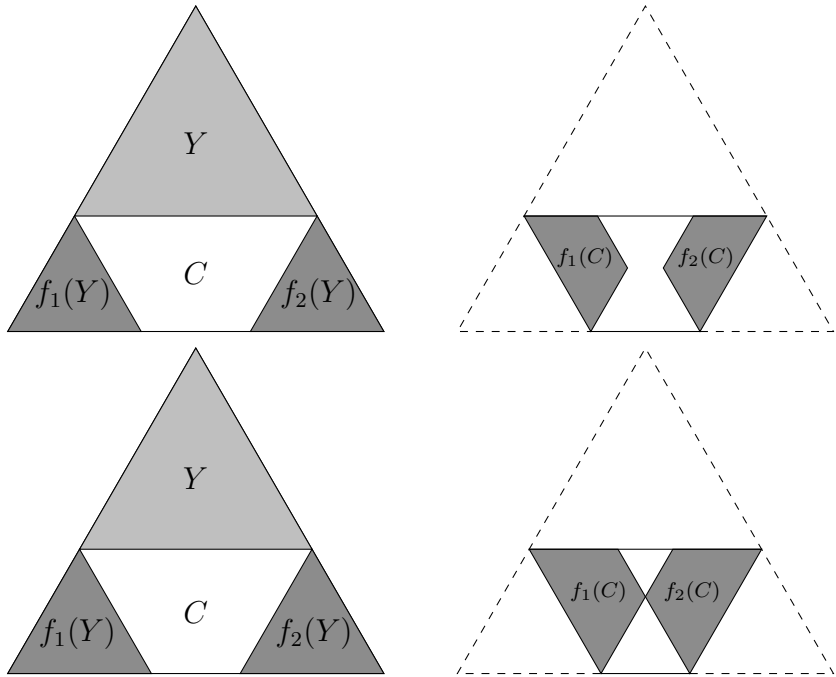}
\caption{The sets $Y, f_1(Y), f_2(Y), C, f_1(C), f_2(C)$ for the choices $r = 11/20$ (above) and $r = (\sqrt{5}-1)/2$ (below) in Example~\ref{exagold}.
The pictures on the left show Condition~(2) of Theorem~\ref{main}.
The pictures on the right show Condition~(3') of Remark~\ref{3dash}.}
\label{Cfigure}
\end{figure}

\begin{figure}[ht]
\includegraphics{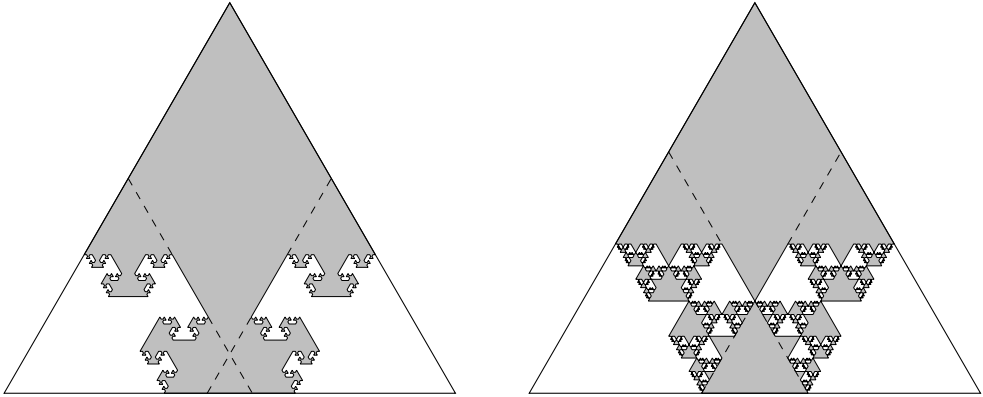}
\caption{Dissections from Example~\ref{exagold} for $r=11/20$ and $r=(\sqrt{5}-1)/2$.}
\label{goldenmean}
\end{figure}

Note that the boundary of this dissection can be described by another IFS with condensation: $B = f_1(B) \cup \big[\frac{r}{1+r},\frac{1}{1+r}\big] \cup f_2(B)$.
The solution $B$ satisfies $f_j(B)= C \cap f_j(C)$ for $j=1,2$ and one can show that $B$ is a simple arc for $r > (\sqrt{5}-1)/2$.
The limit case $r = (\sqrt{5}-1)/2$ is of interest because $f_1(B) = C \cap f_1(C)$ coincides with an IFS attractor associated to the set of uniqueness $\mathcal{U}_{(\sqrt{5}-1)/2}$ in the golden gasket (see p.~1470--1472 in~\cite{Sidorov}).

Example~\ref{exagold} shows that the equilateral triangle can be dissected into similar parts with area ratio $1:1:a$ with $a \ge (3+\sqrt{5})/2$.

\begin{example} \label{exaflip}
Let $D = \triangle\big((0,0), (1,0), \big(\frac{1}{2},
\frac{\sqrt{3}}{2}\big)\big)$ and the IFS $\{f_1,f_2\}$ given by
\[
f_1(x,y) = r R\Big(\frac{\pi}{3}\Big) (x,-y),\qquad f_2(x,y) = r R\Big(\frac{4\pi}{3}\Big) (x,y) + \Big(1-\frac{r}{2}, \frac{r\sqrt{3}}{2}\Big)
\]
with $r \in (0, 1/\phi]$, where $\phi$ denotes ``high phi'', the
positive root of $x^3-2x^2+x-1$. Choose
\[ Y := \triangle\Big(\Big(\frac{r}{2}, \frac{r\sqrt{3}}{2}\Big), \Big(1-\frac{r}{2}, \frac{r\sqrt{3}}{2}\Big), \Big(\frac{1}{2}, \frac{\sqrt{3}}{2}\Big)\Big). \]
Then $Y$ satisfies the conditions of Theorem~\ref{main}, see
Figure~\ref{Cfigure2}. We have $f_2(C) \subset C$ if and only if the
$x$-coordinate of the leftmost point of $f_2(C)$ is at least $1/2$,
{\it i.e.}, $r - \frac{r^2(1-r)}{2} \ge \frac{1}{2}$.
Figure~\ref{flip} shows the dissections for $r=1/2$ and $r=1/\phi$.
Note that $X$ is not connected for $r < 1/\phi$.
\end{example}

\begin{figure}[ht]
\includegraphics{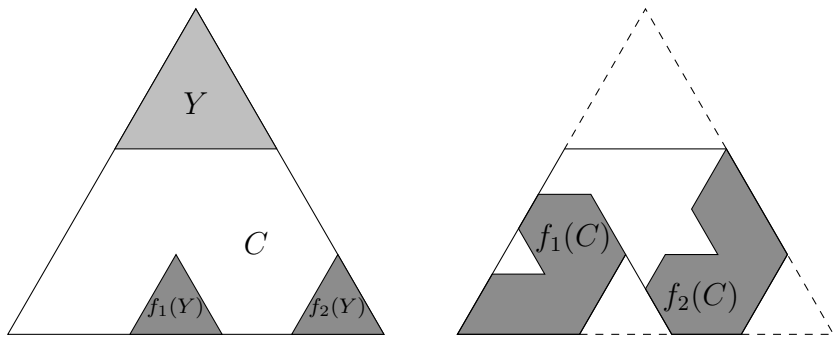}
\caption{The sets $Y, f_1(Y), f_2(Y), C, f_1(C), f_2(C)$ for $r=1/\phi$ in Example~\ref{exaflip}.}
\label{Cfigure2}
\end{figure}

\begin{figure}[ht]
\includegraphics{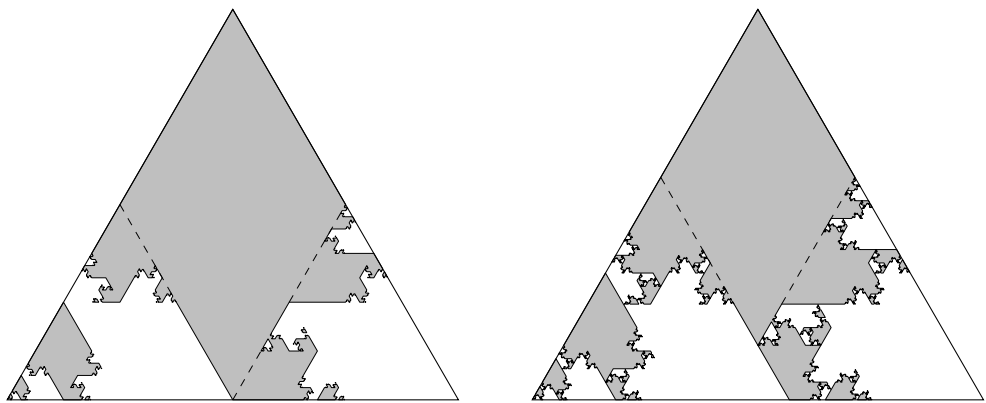}
\caption{Dissections from Example~\ref{exaflip} for $r=1/2$ and $r=1/\phi$.}
\label{flip}
\end{figure}

\begin{example} \label{exasquare}
Let now $D$ be the unit square. By Theorem \ref{IFS}, one can
construct many different dissections of ratio $1:1:a$ with $a \ge
4$. However, for $\phi^2 \le a < 4$ ($\phi$ denotes ``high phi''
again), the following construction is basically the only one that we
know, and we do not know any dissection with $1<a< \phi^2$. Let
\[
f_1(x,y) = r (x,y),\qquad f_2(x,y) = r R\Big(\frac{3\pi}{2}\Big) (x,y) + (1-r,1).
\]
Figure~\ref{figsquare} shows that $Y :=
\overline{D\setminus\Phi(D)}$ satisfies the conditions of
Theorem~\ref{main} for $r \le 1/\phi$. Indeed, we need $\mu_2(f_1(C)
\cap f_2(Y)) = 0$ in order to have $f_1(C) \subset C$, and this
holds if and only if the rightmost point in $C$ of the form $(x,1)$
satisfies $r x \le 1-r$, {\it i.e.}, $r(1-r(1-r)) \le 1-r$. The
dissections for $r=1/2$ and the limit case $r=1/\phi$ are depicted
in Figure~\ref{figsquaredis}.
\end{example}

\begin{figure}
\includegraphics{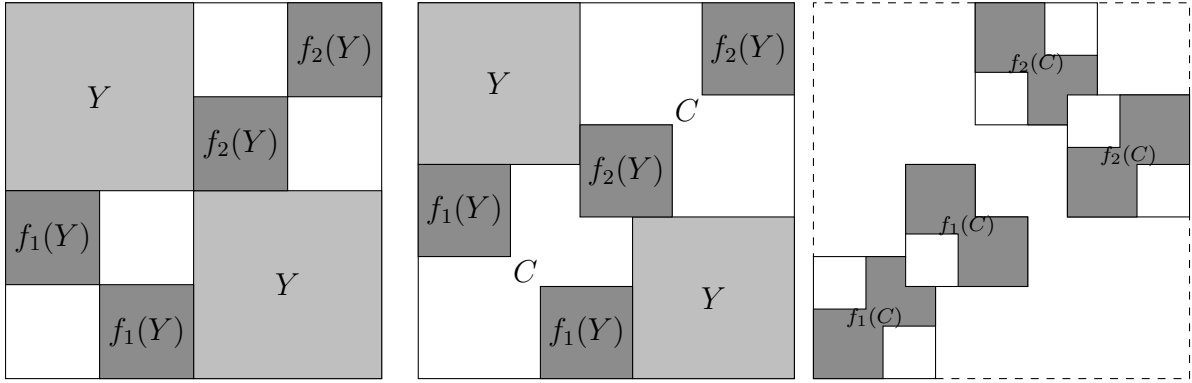}
\caption{Two instances of Example~\ref{exasquare}:
$Y=\overline{D\setminus\Phi(D)}, f_1(Y), f_2(Y)$ for the
non-overlapping case $r=1/2$ (left),
$Y=\overline{D\setminus\Phi(D)}, f_1(Y), f_2(Y),
C=\overline{D\setminus(Y\cup\Phi(Y))}, f_1(C), f_2(C)$ for the
overlapping case $r=1/\phi$ (middle and right).} \label{figsquare}
\end{figure}

\begin{figure}
\includegraphics{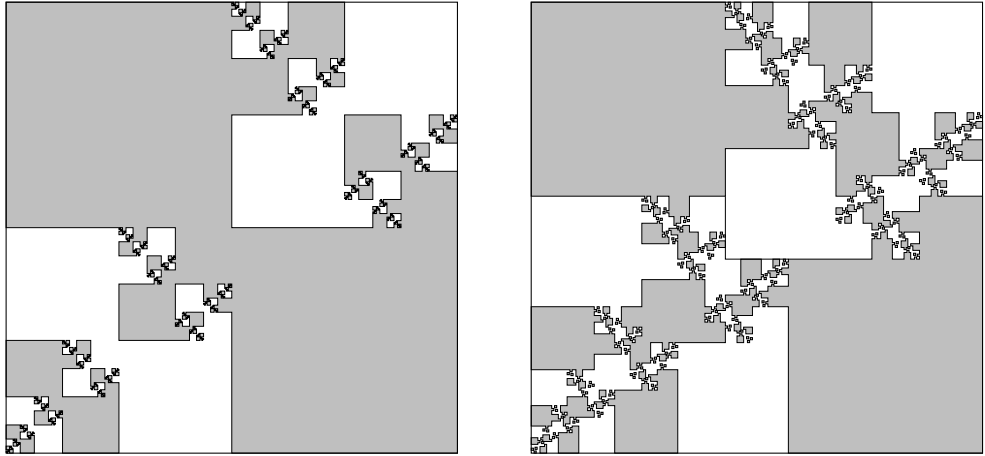}
\caption{Dissections from Example~\ref{exasquare} for $r=1/2$ (left) and $r=1/\phi$ (right).}
\label{figsquaredis}
\end{figure}

The last example concerns non-overlapping outside families of the equilateral triangle.

\begin{example} \label{exaoutside}
Let $D = \triangle\big((0,0), (1,0), \big(\frac{1}{2},
\frac{\sqrt{3}}{2}\big)\big)$ and the IFS $\{f_1,f_2\}$ given by
\[
f_1(x,y) = r R\Big(\frac{4\pi}{3}\Big) (x,y) + \Big(\frac{r}{2}, \frac{r\sqrt{3}}{2}\Big),\qquad f_2(x,y) = -r (x,y) + \Big(\frac{3r}{2}, \frac{r\sqrt{3}}{2}\Big),
\]
with $r \in (0, 1/\phi]$.
This family is non-overlapping, and outside for $r>1/2$.
Figure~\ref{figoutside} (where $r=1/\phi$) shows that it satisfies the conditions of Corollary~\ref{outsidecorollary}.
The dissection for the case $r=1/\phi$ is given in Figure~\ref{figoutside2}.
\end{example}

\begin{figure}[ht]
\includegraphics{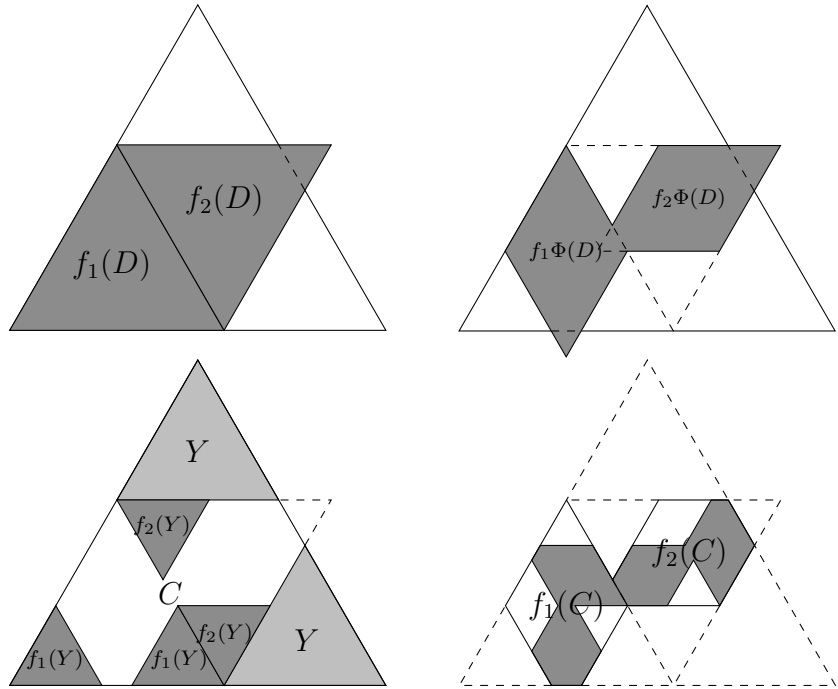}
\caption{The sets $\Phi(D), \Phi^2(D), Y, \Phi(Y), C, \Phi(C)$ for $r=1/\phi$ in Example~\ref{exaoutside}.}
\label{figoutside}
\end{figure}

\begin{figure}[ht]
\includegraphics{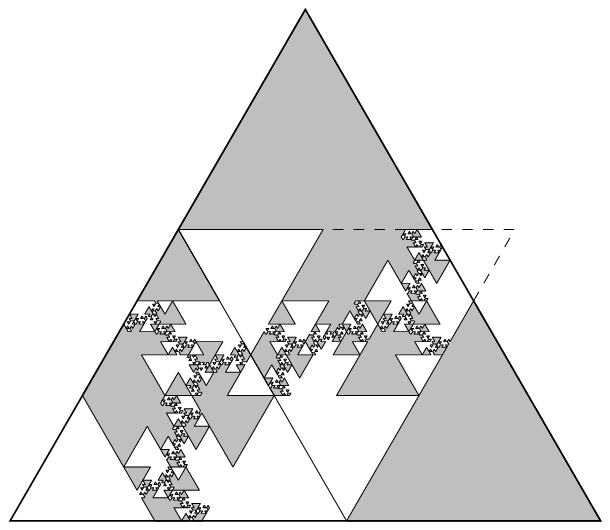}
\caption{The dissection for $r=1/\phi$ in Example~\ref{exaoutside}.}
\label{figoutside2}
\end{figure}
\end{section}

\begin{section}{Problems for Further Study} \label{problems}
We want to finish this paper with some questions and conjectures that are related to the topic of the present paper.
First of all, some part of the question we stated at the beginning remains unsolved.

\begin{question}
Can we dissect an equilateral triangle in three similar parts having
area ratio  $1:1:a$ for some $a \in
\big(1,\frac{3+\sqrt{5}}{2}\big)$?
\end{question}

\begin{question}
Can we dissect a square in three similar parts having area ratio
$1:1:a$ for some $a \in \big(1,\phi^2\big)$, where $\phi$ denotes
``high phi'', the positive root of $x^3-2x^2+x-1$?
\end{question}

We want to generalize these questions. To this matter, let $r(f)$
denote the contraction ratio of a contractive mapping $f$.

\begin{question}
Let $D \subset \mathbb{R}^d$, $d \ge 2$, with $D = \overline{D^\circ}$ and $k \ge 1$ be given.
Find the smallest constant $B<1$ depending on $D$ and $k$ with the following property.
There exist only finitely many families $\{f_1,\ldots,f_k\}$ of contractions satisfying $\min_i r(f_i) > B$ that give rise to a dissection of~$D$.
\end{question}

The assumption $d\ge 2$ cannot be dropped. For $D=[0,1]\subset
\mathbb{R}$, it is trivially possible to dissect $D$ into similar
$k$ intervals by any ratio $r_1:r_2:\dots:r_k$. To state a more
concrete variant of this question, we conjecture that, for each
$D\subset \mathbb{R}^d$ with $D = \overline{D^\circ}$, there are
only finitely many families $\{f_1,\ldots,f_k\}$ solving the
dissection problem for $D$ and satisfying $\sum_{i=1}^k r(f_i)^d >
1$. We call such solutions ``sporadic'' solutions of the dissection
problem. The solution depicted in Figure~\ref{rectangle} seems to be
such a sporadic solution.

\begin{question}
Let $D$ and $\mathcal{F}$ be given.
Can we find an algorithm for deciding whether there exists a dissection?
\end{question}
\end{section}

We thank Arturas Dubickas and Charlene Kalle for stimulating discussions.
Especially, Figure~\ref{approximation2} with $r=1/2$ is due to Dubickas.
We are deeply indebted to Tohru Tanaka at Meikun high school who informed us about the reference~\cite{Gardner:94}.

\bibliographystyle{siam}
\bibliography{landtiling}
\vspace{-7mm}
\end{document}